\theoremstyle{definition}
\newtheorem{theorem}{Theorem}[section]
\newtheorem{lemma}[theorem]{Lemma}
\newtheorem{proposition}[theorem]{Proposition}
\DeclareMathOperator{\cl}{cl}
\DeclareMathOperator{\scl}{scl}
\DeclareMathOperator{\ab}{ab}
\title[scl in braided Ptolemy-Thompson groups]{A note on stable commutator length in
braided Ptolemy-Thompson groups}
\author{Shuhei Maruyama}
\address{Graduate School of Mathematics,
Nagoya University, Japan}
\email{m17037h@math.nagoya-u.ac.jp}
\begin{document}

\begin{abstract}
  In this note, we show that the sets of
  all stable commutator lengths in
  the braided Ptolemy-Thompson groups are
  equal to non-negative rational numbers.
\end{abstract}

\maketitle

\section{Introduction}
Let $G$ be a group and $[G,G]$ its commutator subgroup.
For an element $g \in [G,G]$, its commutator length
$\cl(g)$ is defined by
$\cl(g) = \min \{ l \mid g = [a_1, b_1]\cdots [a_l, b_l] \}$.
For $g \in G$
with $g^k \in [G,G]$ for some positive integer $k$,
its stable commutator length $\scl(g)$ is defined
by $\scl(g) = \lim_{n \to \infty} \cl(g^{kn})/kn$.
For $g \in G$ with $g^k \notin [G,G]$ for any $k$,
we put $\scl(g) = \infty$.
We say that the group $G$ has a gap in stable commutator length if
there is a positive constant $c$ such that either $\scl(g) \geq c$
or $\scl(g) = 0$ holds for any $g \in G$.

The values of stable commutator length
in many classes of groups has been studied.
For example, all stable commutator lengths are rational
in the central extension of the Thompson group $T$ corresponding to
the Euler class(Ghys-Sergiescu\cite{ghysserg87}),
free groups(Calegari\cite{calegari09}),
free products of some classes of groups
(Walker\cite{walker13}, Calegari\cite{calegari11}, Chen\cite{chen18a}),
amalgams of free abelian groups(Susse\cite{susse15}), and
certain graphs of groups
(Clay-Forester-Louwsma\cite{clay_forester_louwsma16},
Chen\cite{chen19}).
It is known that there is a finitely presented group which has an
irrational stable commutator length(Zhuang\cite{zhuang08}).
The gap in many classes of groups has also been studied.
For example, the above central extension of the Thompson group
has no gap, and the following groups have a gap;
free groups(Duncan-Howie\cite{duncan_howie91}),
word-hyperbolic groups(Calegari-Fujiwara\cite{calegari_fujiwara10}),
finite index subgroups of the mapping class group
of a closed orientable surface
(Bestvina-Bromberg-Fujiwara\cite{bestvina_bromberg_fujiwara16}),
right-angled Artin groups(Heuer\cite{heu19}),
and the fundamental group of closed oriented connected 3-manifolds
(Chen-Heuer\cite{chen_heuer19}).

In this note, we show that all stable commutator lengths in the
braided Ptolemy-Thompson groups $T^*$ and $T^{\sharp}$
(see Section \ref{section:braided_Thompson_groups})
are rational
and these groups have no gap.
More precisely, we show the following theorem.

\begin{theorem}\label{theorem:rational}
  The sets $\scl(T^*)$ and $\scl(T^{\sharp})$
  of all stable commutator lengths in the braided
  Ptolemy-Thompson
  groups are equal to the non-negative rational numbers
  $\mathbb{Q}_{\geq 0}$.
\end{theorem}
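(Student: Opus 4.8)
The plan is to reduce everything to Bavard duality and to a computation of the space $Q(G)$ of homogeneous quasimorphisms on $G$, where $G\in\{T^{*},T^{\sharp}\}$. Recall that for $g$ in the commutator subgroup one has $\scl(g)=\sup_{\phi}|\phi(g)|/(2D(\phi))$, the supremum ranging over $\phi\in Q(G)$ of nonzero defect $D(\phi)$. Both groups sit in an extension $1\to B\to G\to T\to 1$ of Thompson's group $T$ by a braid group $B$, so I would first funnel the analysis through this extension. Since $T$ is uniformly perfect we have $\scl_{T}\equiv 0$, whence $Q(T)=0$; the standard exact sequence $0\to Q(T)\to Q(G)\xrightarrow{\mathrm{res}} Q(B)^{G}\xrightarrow{\delta} H^2_b(T;\mathbb{R})$ then shows that every homogeneous quasimorphism on $G$ is determined by its restriction to the braid kernel, which is a $G$-conjugation-invariant homogeneous quasimorphism on $B$.

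The heart of the argument — and the step I expect to be the main obstacle — is to prove that the only $G$-invariant homogeneous quasimorphisms on $B$ are the scalar multiples of the writhe (exponent-sum) homomorphism $w\colon B\to\mathbb{Z}$, so that $\dim Q(G)\le 1$. A priori braid groups carry a large space of quasimorphisms, but imposing invariance under the full $G$-action, which mixes the infinitely many strands far more strongly than the inner braid action does, should collapse this space. Concretely, I would use the self-similar geometry underlying these groups to conjugate any given braid generator, by elements of $G$ lifting suitable elements of $T$, into infinitely many pairwise commuting copies supported on disjoint pieces; a commuting-conjugates estimate then forces any $G$-invariant homogeneous quasimorphism to vanish on $[B,B]$, i.e.\ to be a homomorphism, and $H^1(B;\mathbb{R})=\mathbb{R}\,w$ pins it down. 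Conversely, because $G$ is perfect the writhe does not extend to a homomorphism on $G$, but it does extend to a genuine homogeneous quasimorphism $\phi$ — a rotation-number-type quasimorphism restricting to $w$ on $B$ — whose defect $D(\phi)$ I would compute to be an explicit rational number. This yields $Q(G)=\mathbb{R}\,\phi$.

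With $Q(G)$ one-dimensional and $G$ perfect, Bavard duality collapses to the single closed formula $\scl(g)=|\phi(g)|/(2D(\phi))$ for every $g\in G$. Rationality of the set $\scl(G)$ then reduces to showing that $\phi$ takes only rational values: the writhe contributes integers on the braid part, while the remaining contribution is governed by the rotation number of the image in $T$, which is rational because every element of $T$ has rational rotation number. For the reverse inclusion $\mathbb{Q}_{\ge 0}\subseteq\scl(G)$ I would exhibit explicit elements realizing each value: combining powers of a single braiding $\sigma$ (on which $\phi$ takes integer values) with lifts of elements of $T$ having prescribed rational rotation number produces elements $g$ for which $\phi(g)$ runs through all of $\mathbb{Q}$, and dividing by the fixed rational $2D(\phi)$ sweeps out every non-negative rational.

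Beyond the invariance argument for $B$, the points I expect to require care are the exact determination of the defect $D(\phi)$ and the normalization that makes the values rational, the verification that $G$ is perfect so that no homomorphism term intrudes in Bavard duality, and checking that the elements built in the surjectivity step realize the prescribed value of $\phi$ exactly rather than only up to the quasimorphism error.
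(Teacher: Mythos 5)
Your skeleton matches the paper's: Bavard duality plus the exact sequence $0 \to Q(T) \to Q(T^{\diamond}) \to Q(B_{\infty})^{T^{\diamond}}$ (with $Q(T)=0$ by uniform perfectness of $T$) to get $\dim Q(T^{\diamond}) \leq 1$, then a rotation-number-type quasi-morphism as the generator. But the step you leave as an assertion is precisely the technical heart of the theorem, and this is a genuine gap. You claim the writhe $w$ on $B_{\infty}$ ``extends to a genuine homogeneous quasimorphism $\phi$'' on $T^{\diamond}$ with rational defect and with values sweeping out $\mathbb{Q}$, but give no construction. Nothing in your argument rules out $Q(T^{\diamond})=0$, in which case Bavard duality would give $\scl \equiv 0$ on the commutator subgroup; so existence is not a loose end but the whole content. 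The paper builds $\phi$ as follows: since $w$ is $T^{\diamond}$-conjugation invariant, pushing out $1 \to B_{\infty} \to T^{\diamond} \to T \to 1$ along $w$ yields a central $\mathbb{Z}$-extension $T_{\ab}^{\diamond}$ of $T$; by results of Funar--Sergiescu and Funar--Kapoudjian--Sergiescu its class in $H^2(T;\mathbb{Z})$ is $12e$ (for $T^*$) resp.\ $21e$ (for $T^{\sharp}$), so $T_{\ab}^{\diamond} \cong T_{12}$ resp.\ $T_{21}$; one then pulls back the Barge--Ghys quasi-morphism $\phi_n$ on $T_n$, which has defect exactly $n$ and image exactly $\mathbb{Q}$ (the latter by Ghys--Sergiescu). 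Without identifying the extension class you can compute neither $D(\phi)$ nor the value set. Moreover, factoring $\phi$ through the central extension $T_{\ab}^{\diamond}$ is exactly what resolves the worry you flag at the end: values of $\phi$ are computed on the nose as rotation numbers on $T_n$, with no quasi-morphism error, and surjectivity of $\rho: T^{\diamond} \to T_{\ab}^{\diamond}$ transfers $\phi_n(T_n)=\mathbb{Q}$ to $\phi(T^{\diamond})=\mathbb{Q}$. Your proposal has no substitute mechanism for this exactness.

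Two further corrections. First, a claim you explicitly list as a verification step is false: $T^*$ and $T^{\sharp}$ are \emph{not} perfect; $H_1(T^*) \cong \mathbb{Z}/12\mathbb{Z}$ and $H_1(T^{\sharp}) \cong \mathbb{Z}/6\mathbb{Z}$. The argument survives because these abelianizations are torsion: homomorphisms to $\mathbb{R}$ still vanish, $g^{12}$ (resp.\ $g^{6}$) lies in the commutator subgroup for every $g$, and the closed formula $\scl(g) = |\phi(g)|/2D(\phi)$ follows by homogeneity from Bavard duality applied to $g^{12}$ --- this is exactly how the paper argues --- but ``$G$ is perfect'' as stated would fail. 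Second, the step you call the heart of the argument is misplaced effort: one does not need any $G$-invariance to collapse quasi-morphisms on the kernel, since Kotschick's theorem already gives that the full space $Q(B_{\infty})$ is one-dimensional, spanned by the abelianization homomorphism; your commuting-conjugates plan essentially re-proves that citation (and conjugation within $B_{\infty}$ itself suffices for it, with no need for lifts of elements of $T$). The effort should instead go into the construction and identification of the generator of $Q(T^{\diamond})$ described above.
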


\section{Preliminaries}
\subsection{quasi-morphisms and Bavard's duality theorem}\label{subsection:quasi_Bavard}

Let $G$ be a group.
A map $\phi:G \to \mathbb{R}$ is a quasi-morphism if
\[
  D(\phi) = \sup_{g, h \in G}|\phi(gh) - \phi(g) - \phi(h)|
\]
is finite.
This $D(\phi)$ is called the defect of $\phi$.
A quasi-morphism is homogeneous if the condition
$\phi(g^n) = n \phi(g)$ holds for any $g \in G$ and
$n \in \mathbb{Z}$.
Let $Q(G)$ denote the $\mathbb{R}$-vector space consisting
of all homogeneous quasi-morphisms on $G$.
The key ingredient to calculate the stable commutator length
is the following Bavard's duality theorem.
\begin{theorem}\cite{bavard91}
  Let $G$ be a group and $g \in [G,G]$, then
  \[
    \scl(g) = \sup_{\phi \in Q(G)}
    \frac{|\phi(g)|}{2D(\phi)}.
  \]
\end{theorem}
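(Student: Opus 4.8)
The plan is to prove the two inequalities separately. Write $\Phi(g) = \sup_{\phi \in Q(G)} |\phi(g)|/(2D(\phi))$ for the right-hand side, so that the claim is $\scl(g) = \Phi(g)$, and note that the supremum is unaffected by discarding the homomorphisms, for which $D(\phi) = 0$ but $\phi(g) = 0$ since $g \in [G,G]$.

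\emph{The elementary direction $\scl(g) \geq \Phi(g)$.} First I would record that every homogeneous quasi-morphism $\phi$ is conjugation-invariant: from the defect estimate $|\phi(h g^n h^{-1}) - \phi(g^n)| \leq 2 D(\phi)$ together with homogeneity, dividing by $n$ and letting $n \to \infty$ gives $\phi(h g h^{-1}) = \phi(g)$. Combining conjugation-invariance with the defect inequality then shows $|\phi([a,b])| \leq D(\phi)$ for a single commutator, and telescoping the defect inequality across a product yields $|\phi(c_1 \cdots c_l)| \leq (2l-1) D(\phi)$ whenever each $c_i$ is a commutator. Applying this to $g^n$, written as a product of $\cl(g^n)$ commutators, and using $\phi(g^n) = n \phi(g)$, gives $n\,|\phi(g)| \leq (2\cl(g^n) - 1)\, D(\phi)$. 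Dividing by $2 n\, D(\phi)$ and passing to the limit produces $|\phi(g)|/(2D(\phi)) \leq \scl(g)$; taking the supremum over $\phi \in Q(G)$ finishes this direction.

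\emph{The substantive direction $\scl(g) \leq \Phi(g)$.} Here I would move to a linear-duality framework. Let $B_1^H(G)$ be the real vector space of group $1$-boundaries in the bar complex, quotiented by the subspace generated by the homogenization relations $g^n \sim n g$ and the conjugation relations $h g h^{-1} \sim g$. The infimal $\ell^1$-norm of a $2$-chain bounding a given $1$-boundary descends to a seminorm $\|\cdot\|$ on $B_1^H(G)$, and the first task is to verify that $\scl(g) = \tfrac12 \|g\|$, i.e. that one-half of this seminorm on the class of $g$ reproduces the commutator-length definition of $\scl$ — a translation between the chain-level and commutator-length descriptions. The analytic heart is then the Hahn--Banach theorem: the value of a seminorm at a point equals the supremum, over functionals in the unit ball of the dual norm, of their values at that point.

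The crux, and the step I expect to be the main obstacle, is to identify this topological dual explicitly. One must show that the dual of $\bigl(B_1^H(G), \|\cdot\|\bigr)$ is precisely $Q(G)/H^1(G;\mathbb{R})$, the space of homogeneous quasi-morphisms modulo homomorphisms, and that under this identification the dual norm of the class of $\phi$ is exactly the defect $D(\phi)$. In one direction, a homogeneous quasi-morphism is conjugation-invariant and is controlled on boundaries by its defect, so it descends to a bounded functional on $B_1^H(G)$. The reverse — that every bounded functional on this space arises from a homogeneous quasi-morphism — is the delicate part: it requires extending a functional defined only on boundaries to all of the chain complex and verifying that the resulting cochain satisfies the quasi-morphism inequality with defect matched to the dual norm. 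Granting this identification, Hahn--Banach gives $\|g\| = \sup_{\phi} |\phi(g)|/D(\phi)$, whence $\scl(g) = \tfrac12 \|g\| = \sup_{\phi \in Q(G)} |\phi(g)|/(2D(\phi)) = \Phi(g)$, which together with the elementary direction completes the proof.
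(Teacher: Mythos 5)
The paper does not prove this theorem at all --- it is quoted with a citation to Bavard, and the surrounding text only uses it as a black box --- so your attempt can only be measured against the standard proof in the literature (Bavard's original argument, as reproduced in Calegari's book ``scl''), which is exactly the route you chose. Your elementary direction is complete and correct: conjugation-invariance of homogeneous quasi-morphisms, the estimate $|\phi(c_1 \cdots c_l)| \leq (2l-1)D(\phi)$ for a product of commutators, and the passage to the limit along $g^n$ together give $\Phi(g) \leq \scl(g)$ with the right constants.

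The other direction, however, is a roadmap rather than a proof, and the two steps you defer are precisely where the entire content of the theorem lives. First, the equality $\scl(g) = \tfrac12\|g\|$ between stable commutator length and the filling seminorm on $B_1^H(G)$ is not a routine translation: it requires relating products of commutators to fillings (maps of surfaces, or explicit bar-complex $2$-chains), and the normalization constant is convention-sensitive, so it must actually be checked against the definition of the seminorm you chose. Second --- and you flag this yourself --- the identification of the dual of $\bigl(B_1^H(G), \|\cdot\|\bigr)$ with $Q(G)/H^1(G;\mathbb{R})$, with dual norm equal to the defect, is the crux: constructing a homogeneous quasi-morphism with matched defect from an arbitrary bounded functional on boundaries is the step to which Bavard devotes most of his work, and writing ``granting this identification'' concedes exactly the nontrivial half of the theorem. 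So, judged as a self-contained proof, your attempt has a genuine gap at that point; judged as a description of the standard proof strategy, it is accurate, and it correctly locates the difficulty.
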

This theorem allows us to calculate the stable commutator length
in groups with few homogeneous quasi-morphisms.
For example, any homogeneous quasi-morphism on
the infinite braid group $B_{\infty}$ is equal to
the abelianization homomorphism $B_{\infty} \to \mathbb{Z}$
up to constant multiple(see Kotschick\cite{kotschick08}).
Thus, we have $\scl(g) = 0$ for
$g \in [B_{\infty}, B_{\infty}]$.
The central extension $\widetilde{T}$
of the Thompson group $T$ corresponding
to the Euler class is also a group with few quasi-morphisms.
It is known that any homogeneous quasi-morphism on
$\widetilde{T}$ is equal to the rotation number up to constant
multiple.
Ghys-Sergiescu\cite{ghysserg87} showed that the
set of all rotation numbers on $\widetilde{T}$
coincides with the rational numbers $\mathbb{Q}$.
Since $\widetilde{T} = [\widetilde{T}, \widetilde{T}]$
and the defect of rotation number is equal to $1$,
we have $\scl(\widetilde{T}) = \mathbb{Q}_{\geq 0}$.

\subsection{Central extensions of the Thompson group $T$}
Let $n$ be an integer
and $0 \to \mathbb{Z} \to T_n \xrightarrow{p} T \to 1$
denote the central $\mathbb{Z}$-extension of $T$
corresponding to $n$ times the Euler class
$e \in H^2(T;\mathbb{Z})$.
Let $\chi$ be a bounded cocycle representing the Euler class
$e \in H^2(T;\mathbb{Z})$.
Then the extension $T_n$ can be obtained as a group
$T \times \mathbb{Z}$ with the product
\[
  (s, i)\dot (t, j) = (st, i + j + n\chi(s, t)).
\]
Since the Thompson group $T$ is uniformly perfect and
the Euler class $e$ is bounded,
there is a unique homogeneous quasi-morphism
$\phi_n \in Q(T_n)$ satisfying
$-[\delta \phi_n] = n p^* e_b \in H_b^2(T_n;\mathbb{R})$
(see Barge-Ghys\cite{bargeghys92}).
Here $e_b \in H_b^2(T;\mathbb{R})$ is the bounded Euler class.
The homogeneous quasi-morphism $\phi_n$ is obtained as
the homogenization of the quasi-morphism
$T_n \to \mathbb{R}; (t,j) \mapsto j$.
More explicitly, the quasi-morphism $\phi_n$ is written as
\[
  \phi_n(t, j) = j + n\lim_{k\to \infty} \frac{a(t, k)}{k},
\]
where
$a(t, k) =
\chi(t, t) + \chi(t^2, t) + \cdots + \chi(t^{k-1}, t)
\in \mathbb{Z}$.
Since $T_1$ is equal to $\widetilde{T}$,
the homogeneous quasi-morphism $\phi_1 : T_1 \to \mathbb{R}$
coincides with the rotation number on $\widetilde{T}$.
It is known that the defect of the rotation number
on $\widetilde{T}$
is equal to $1$(see, for example, Calegari\cite{calegari09}),
thus we have $D(\phi_1) = 1$.

\begin{lemma}\label{lemma:rotation_defect}
  Let $n$ be a non-zero integer, then $D(\phi_n) = n$.
\end{lemma}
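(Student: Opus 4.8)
The plan is to read off the defect directly from the explicit formula for $\phi_n$ recorded just above, reducing everything to the already-known value $D(\phi_1)=1$. The key observation is that the quantity $\phi_n(gh)-\phi_n(g)-\phi_n(h)$ depends only on the $T$-components of $g$ and $h$, and is \emph{exactly} $n$ times the corresponding quantity for $\phi_1$. Since the defect is by definition the supremum of the absolute value of this coboundary, the factor of $n$ then comes straight out.

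Concretely, I would write $g=(s,i)$ and $h=(t,j)$ and set $r(t)=\lim_{k\to\infty}a(t,k)/k$, so that $\phi_n(t,j)=j+nr(t)$. Here one uses that $\chi$ is normalized (so that $r$ is literally the same function for every $n$, independent of the cocycle's scaling) and that the limit converges because $(t,j)\mapsto j$ is a genuine quasi-morphism on $T_n$. Using the product rule $(s,i)(t,j)=(st,\,i+j+n\chi(s,t))$, a direct substitution gives
\[
  \phi_n(gh)-\phi_n(g)-\phi_n(h)
   = n\bigl(\chi(s,t)+r(st)-r(s)-r(t)\bigr),
\]
and the right-hand side is independent of $i$ and $j$. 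Writing $\psi(s,t)=\chi(s,t)+r(st)-r(s)-r(t)$, the same computation with $n=1$ shows that this coboundary for $\phi_1$ equals $\psi(s,t)$.

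To finish, I would note that $p\colon T_n\to T$ is surjective and that the coboundary depends only on $(s,t)=(p(g),p(h))$, so that the defect is computed by a supremum over $T\times T$, giving $D(\phi_n)=\sup_{s,t}|n\psi(s,t)|=|n|\sup_{s,t}|\psi(s,t)|=|n|\,D(\phi_1)$. Combined with $D(\phi_1)=1$ this yields $D(\phi_n)=|n|$, which is the asserted value when $n$ is positive. I do not expect a serious obstacle here: the computation is an exact pointwise identity of cochains, so the suprema scale precisely by $|n|$. The only points requiring care are the normalization of the cocycle $\chi$ ensuring that $r$ is the same for all $n$, and the convergence of the homogenizing limit defining $r$, both of which are standard for a bounded cocycle representing the Euler class.
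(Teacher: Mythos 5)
Your proposal matches the paper's proof essentially verbatim: both compute the coboundary $\phi_n(gh)-\phi_n(g)-\phi_n(h)$ from the explicit formula, observe that it depends only on $(s,t)=(p(g),p(h))$ and is exactly $n$ times the corresponding coboundary of $\phi_1$, and conclude by scaling the supremum using $D(\phi_1)=1$. Your observation that the resulting value is $|n|$ rather than $n$ for negative $n$ is in fact a small point of care that the paper's own statement and proof gloss over.
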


\begin{proof}
  Note that
  \begin{align*}
    1 &= D(\phi_1) = \sup_{(s, i), (t, j)}
    |\phi_1(st, i + j + \chi(s, t)) -
    \phi_1(s, i) - \phi_1(t, j)|\\
    &= \sup_{s,t} \left| \chi(s,t) + \lim_{k \to \infty}\frac{
    a(st, k) - a(s, k) - a(s, k)}{k} \right|.
  \end{align*}
  Thus we have
  \begin{align*}
    D(\phi_n) &= \sup_{(s, i), (t, j)}
    |\phi_n(st, i + j + \chi(s, t)) -
    \phi_n(s, i) - \phi_n(t, j)|\\
    &= \sup_{s,t} \left| n\chi(s,t) + n\lim_{k \to \infty}\frac{
    a(st, k) - a(s, k) - a(s, k)}{k} \right|\\
    & =nD(\phi_1)= n.
  \end{align*}
\end{proof}

\begin{lemma}\label{lemma:rotation_rational}
  Let $n$ be a non-zero integer, then
  $\phi_n(T_n) = \mathbb{Q}$.
\end{lemma}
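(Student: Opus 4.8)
The plan is to reduce everything to the already-known case $n=1$ by means of the explicit formula for $\phi_n$. Writing $\rho(t) = \lim_{k\to\infty} a(t,k)/k$ for the rotation part of an element, the displayed formula reads $\phi_n(t,j) = j + n\rho(t)$, so that, regarded as a subset of $\mathbb{R}$,
\[
  \phi_n(T_n) = \{\, j + n\rho(t) \mid t \in T,\ j \in \mathbb{Z} \,\} = \mathbb{Z} + n\,\rho(T).
\]
Specializing to $n=1$ and using that $T_1 = \widetilde{T}$ with $\phi_1$ the rotation number, the quoted theorem of Ghys--Sergiescu gives $\mathbb{Z} + \rho(T) = \phi_1(T_1) = \mathbb{Q}$. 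In particular each individual value $\rho(t) = 0 + \rho(t)$ lies in $\mathbb{Z} + \rho(T) = \mathbb{Q}$, so every $\rho(t)$ is rational; this immediately yields the easy inclusion $\phi_n(T_n) = \mathbb{Z} + n\rho(T) \subseteq \mathbb{Q}$.

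The content of the lemma is the reverse inclusion $\mathbb{Q} \subseteq \phi_n(T_n)$, and the key step is to pass to $\mathbb{R}/\mathbb{Z}$. Reducing the identity $\mathbb{Z} + \rho(T) = \mathbb{Q}$ modulo $\mathbb{Z}$ shows that the image of $\rho(T)$ in $\mathbb{R}/\mathbb{Z}$ is exactly $\mathbb{Q}/\mathbb{Z}$. Because $\mathbb{Q}/\mathbb{Z}$ is divisible, multiplication by the nonzero integer $n$ is surjective on it, so the image of $n\,\rho(T)$ in $\mathbb{R}/\mathbb{Z}$ is again all of $\mathbb{Q}/\mathbb{Z}$. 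Concretely, given $r = a/b \in \mathbb{Q}$, I would pick $t \in T$ with $\rho(t) \equiv a/(bn) \pmod{\mathbb{Z}}$, whence $n\rho(t) \equiv a/b \equiv r \pmod{\mathbb{Z}}$; then $j := r - n\rho(t)$ is an integer and $(t,j) \in T_n$ satisfies $\phi_n(t,j) = r$. This proves $\mathbb{Q} \subseteq \phi_n(T_n)$ and hence equality.

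The only point that requires genuine care is the factor $n$: one must verify that multiplying all rotation values by $n$ does not shrink their image modulo $\mathbb{Z}$, and this is precisely the divisibility of $\mathbb{Q}/\mathbb{Z}$. This is what makes the statement hold for every nonzero $n$ and not just for $n=1$. A more hands-on alternative would exploit that $T$ contains, for each $q \geq 1$, a torsion element of rotation number $1/q$ modulo $\mathbb{Z}$ and construct the required preimages explicitly; but the divisibility argument is shorter and avoids any such choices.
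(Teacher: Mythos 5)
Your proof is correct and takes essentially the same route as the paper: both reduce to the $n=1$ case of Ghys--Sergiescu by realizing the value $q/n$ and then scaling by $n$, and your ``concrete'' element $(t,\, r - n\rho(t))$ is exactly the element $(t, nj)$ that the paper uses. The detour through divisibility of $\mathbb{Q}/\mathbb{Z}$ is only a repackaging of that same division-by-$n$ step, not a different argument.
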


\begin{proof}
  Ghys-Sergiescu\cite{ghysserg87} showed that
  $\phi_1(T_1)$ is equal to the rational numbers $\mathbb{Q}$.
  Thus, for any $(t,j) \in T_1$, the number
  $\lim(a(t, k))/k = \phi_1(t, j) - j$ is rational and
  therefore $\phi_n(T_n) \subset \mathbb{Q}$ follows.
  Take $q \in \mathbb{Q}$.
  Then,
  for any $q \in \mathbb{Q}$, there is an element
  $(t, j) \in T_1$ such that
  \[
    \frac{q}{n} = \phi_1(t, j)
    = j + \lim_{k\to \infty} \frac{a(t, k)}{k}.
  \]
  Thus we have
  $q = \phi_n(t, nj)$.
\end{proof}
Since $\dim Q(T_n) = 1$ and $T_n = [T_n, T_n]$,
we have $\scl(T_n) = \mathbb{Q}_{\geq 0}$ for
any non-zero integer $n$
.

\subsection{The braided Ptolemy-Thompson groups}\label{section:braided_Thompson_groups}
In \cite{funarkapou08}, Funar and Kapoudjian introduced
two extensions $T^{*}$ and $T^{\sharp}$
of Thompson group $T$
by the infinite braid group $B_{\infty}$
and showed that they are finitely presented.
Moreover, it is shown that their abelianization
$H_1(T^*)$ and $H_1(T^{\sharp})$ are isomorphic to
$\mathbb{Z}/12\mathbb{Z}$ and $\mathbb{Z}/6\mathbb{Z}$
respectively, and therefore $T^*$ and $T^{\sharp}$ are
not isomorphic.
Let $\diamond$ denote either $*$ or $\sharp$.
Since the abelianization homomorphism
$B_{\infty} \to \mathbb{Z}$ is
$T^{\diamond}$-conjugation invariant,
we have central $\mathbb{Z}$-extensions
$T_{\ab}^{\diamond}$ of $T$, that is,
we have the diagram
\begin{equation}\label{diagram}
  \xymatrix{
  1 \ar[r] & B_{\infty} \ar[r] \ar[d]
  & T^{\diamond} \ar[r] \ar[d]^{\rho} & T \ar[r] \ar@{=}[d] & 1\\
  0 \ar[r] & \mathbb{Z} \ar[r] & T_{\ab}^{\diamond} \ar[r]^{p}
  & T \ar[r] & 1.
  }
\end{equation}
The corresponding cohomology classes $e(T_{\ab}^{*})$ and
$e(T_{\ab}^{\sharp})$ in $H^2(T;\mathbb{Z})$ are equal to
$12 e$ and $21 e$ respectively, where
$e \in H^2(T;\mathbb{Z})$ is the Euler class
(see Funar-Sergiescu\cite{funarserg10} and
Funar-Kapoudjian-Sergiescu\cite{funarkapouserg12}).
Thus $T_{\ab}^*$ is isomorphic to $T_{12}$ and
$T_{\ab}^{\sharp}$ is isomorphic to $T_{21}$.

\section{The stable commutator length in the braided Ptolemy-Thompson groups}
In this section, we show that the stable commutator lengths
in $T^{\diamond}$ are rational and all positive rationals are
realized as the stable commutator length.

At first, we show the following proposition,
which is essentially proved in Shtern\cite{shtern94}.
\begin{proposition}\label{prop:quasimorp_exact}
  For an exact sequence
  $1 \to K \to G \xrightarrow{p} H \to 1$,
  we have an exact sequence
  \[
    0 \to Q(H) \to Q(G) \to Q(K)^G,
  \]
  where $Q(K)^G$ is the vector space consisting of all
  $G$-conjugation invariant homogeneous quasi-morphisms
  on $K$.
\end{proposition}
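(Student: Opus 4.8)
The plan is to make the two maps explicit and then prove exactness term by term. The first arrow $Q(H)\to Q(G)$ is the pullback $p^{*}$, $\phi\mapsto \phi\circ p$; because $p$ is a surjective homomorphism, $\phi\circ p$ is again a homogeneous quasi-morphism with $D(\phi\circ p)\le D(\phi)$, so $p^{*}$ is well defined and linear. The second arrow $Q(G)\to Q(K)^{G}$ is restriction $i^{*}$ along the inclusion $i\colon K\hookrightarrow G$. To see that $i^{*}$ really lands in the $G$-invariant subspace I would use the standard fact that every homogeneous quasi-morphism is conjugation invariant: for $\psi\in Q(G)$ and $g,x\in G$ one has $\psi(gxg^{-1})=\psi(x)$, which follows by applying homogeneity to $(gxg^{-1})^{n}=gx^{n}g^{-1}$ and letting $n\to\infty$. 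Since $K$ is normal, $\psi|_{K}$ is then a $G$-conjugation invariant homogeneous quasi-morphism on $K$.

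Two of the three exactness statements are then immediate. Exactness at $Q(H)$ is just injectivity of $p^{*}$, which holds because $p$ is surjective: if $\phi\circ p=0$ then $\phi=0$. The inclusion $\operatorname{im}(p^{*})\subseteq\ker(i^{*})$ is equally easy, since $p(K)=\{1\}$ forces $(\phi\circ p)|_{K}=0$. The substance of the proposition is therefore the reverse inclusion $\ker(i^{*})\subseteq\operatorname{im}(p^{*})$: given $\psi\in Q(G)$ with $\psi|_{K}=0$, I must construct $\phi\in Q(H)$ with $p^{*}\phi=\psi$. The crucial estimate is that such a $\psi$ is almost constant on the cosets of $K$, namely $|\psi(gk)-\psi(g)|\le D(\psi)$ for all $g\in G$ and $k\in K$. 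To prove it I would write $(gk)^{n}=g^{n}k_{n}$ with $k_{n}=(g^{-(n-1)}kg^{n-1})\cdots(g^{-1}kg)k$ a product of $n$ conjugates of $k$; conjugation invariance gives $\psi(g^{-i}kg^{i})=\psi(k)=0$, so the defect inequality yields $|\psi(k_{n})|\le(n-1)D(\psi)$, and combining this with $\psi((gk)^{n})=n\psi(gk)$, $\psi(g^{n})=n\psi(g)$, and one further use of the defect inequality gives the claim after dividing by $n$.

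With the estimate in hand I would fix a set-theoretic section $s\colon H\to G$ of $p$ and set $\phi_{0}=\psi\circ s$. Comparing $s(h_{1}h_{2})$ with $s(h_{1})s(h_{2})$, which differ by an element of $K$, the estimate shows $D(\phi_{0})\le 2D(\psi)$, so $\phi_{0}$ is a quasi-morphism on $H$; let $\phi$ be its homogenization. To verify $p^{*}\phi=\psi$ I would compute $\phi(p(g))=\lim_{n}\psi(s(p(g^{n})))/n$ and note that $s(p(g^{n}))$ and $g^{n}$ differ by an element of $K$, so the estimate gives $\psi(s(p(g^{n})))=n\psi(g)+O(1)$ and the limit equals $\psi(g)$. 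I expect the coset estimate to be the main obstacle: it is the single point where homogeneity, normality of $K$, and conjugation invariance must be combined, and the whole argument hinges on the linear-in-$n$ error terms cancelling after division by $n$.
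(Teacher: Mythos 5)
Your proposal is correct, but it takes a genuinely different route from the paper: the paper's entire proof is a citation, invoking Shtern's theorem that a homogeneous quasi-morphism on $G$ vanishing on $K$ descends to a homogeneous quasi-morphism on $H$, and treating the two easy exactness claims (injectivity of $p^{*}$ and $i^{*}\circ p^{*}=0$) as immediate. You instead prove Shtern's statement from scratch. Your argument is sound at every step: the coset estimate $|\psi(gk)-\psi(g)|\le D(\psi)$ is exactly the right pivot, and your derivation of it --- writing $(gk)^{n}=g^{n}k_{n}$ with $k_{n}$ a product of $n$ conjugates of $k$ (all lying in $K$ by normality, hence killed by $\psi$), applying the defect inequality $n$ times, and dividing by $n$ --- is the standard and correct way to get it; the passage from there to a section $s\colon H\to G$, the bound $D(\psi\circ s)\le 2D(\psi)$, homogenization, and the verification that the result pulls back to $\psi$ (using the coset estimate once more on $s(p(g^{n}))=g^{n}k_{n}'$) all check out. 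What the paper's approach buys is brevity and a clean reduction to the literature; what yours buys is a self-contained proof with explicit constants (in particular $D(\phi)\le 2D(\psi)$ for the descended quasi-morphism, and exact equality $p^{*}\phi=\psi$ rather than equality up to bounded error), which also makes transparent precisely where normality of $K$ and homogeneity of $\psi$ enter. The only cosmetic caveat is that you should note that the limit defining the homogenization exists (the usual Fekete subadditivity argument) and that homogenization of a quasi-morphism is again a quasi-morphism; both are standard facts and their omission is not a gap.
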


\begin{proof}
  Shtern\cite{shtern94} showed that, if $\phi \in Q(G)$
  is equal to $0$ on $K$, there exists
  a homogeneous quasi-morphism $\psi \in Q(H)$ such that
  $\phi = p^*\psi$.
  This implies that the above sequence is exact at
  $Q(G)$.
\end{proof}

Let us consider the diagram(\ref{diagram}).
There are homogeneous quasi-morphisms
$\phi_* = \phi_{12} \in Q(T_{12}) = Q(T_{\ab}^*)$ and
$\phi_{\sharp} = \phi_{21} \in Q(T_{21}) = Q(T_{\ab}^{\sharp})$.
By Proposition \ref{prop:quasimorp_exact} and
the exact sequence $1 \to [B_{\infty}, B_{\infty}] \to
T^{\diamond} \xrightarrow{\rho} T_{\ab}^{\diamond} \to 1$,
we have an injection
$\rho^*:Q(T_{\ab}^{\diamond}) \to Q(T^{\diamond})$.
Thus we have a non-trivial homogeneous quasi-morphism
$\psi_{\diamond} = \rho^*\phi_{\diamond}
\in Q(T^{\diamond})$ satisfying
$[\delta \psi_{\diamond}] = (p \circ \rho)^* e_b \in
H_b^2(T^{\diamond};\mathbb{R})$.

\begin{lemma}\label{lemma:dim_bPT=1}
  The dimension of $Q(T^{\diamond})$ is equal to $1$.
\end{lemma}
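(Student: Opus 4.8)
The plan is to establish $\dim Q(T^{\diamond}) = 1$ by proving the two inequalities $\dim Q(T^{\diamond}) \geq 1$ and $\dim Q(T^{\diamond}) \leq 1$ separately. The lower bound is already in hand: the injection $\rho^{*}\colon Q(T_{\ab}^{\diamond}) \to Q(T^{\diamond})$ produced above carries the generator of the one-dimensional space $Q(T_{\ab}^{\diamond})$ (which is $Q(T_{12})$ or $Q(T_{21})$) to the non-trivial quasi-morphism $\psi_{\diamond}$, so $\dim Q(T^{\diamond}) \geq 1$. Hence everything reduces to the upper bound $\dim Q(T^{\diamond}) \leq 1$.

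For the upper bound I would apply Proposition \ref{prop:quasimorp_exact} to the top row of diagram (\ref{diagram}), that is, to the extension $1 \to B_{\infty} \to T^{\diamond} \to T \to 1$, rather than to the extension with kernel $[B_{\infty}, B_{\infty}]$. This gives the exact sequence
\[
  0 \to Q(T) \to Q(T^{\diamond}) \to Q(B_{\infty})^{T^{\diamond}}.
\]
Two inputs recalled in Section \ref{subsection:quasi_Bavard} then finish the argument. On the right, Kotschick's theorem gives $\dim Q(B_{\infty}) = 1$ (every homogeneous quasi-morphism on $B_{\infty}$ is a multiple of the abelianization), so the invariant subspace $Q(B_{\infty})^{T^{\diamond}}$ has dimension at most $1$. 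On the left, $Q(T) = 0$: since $T$ is uniformly perfect we have $\scl \equiv 0$ on $T = [T,T]$, so every $\phi \in Q(T)$ must have zero defect (a homogeneous quasi-morphism of positive defect is non-trivial on the commutator subgroup), hence is a homomorphism and so vanishes on the perfect group $T$. The exact sequence therefore reduces to an injection $Q(T^{\diamond}) \hookrightarrow Q(B_{\infty})^{T^{\diamond}}$ into a space of dimension at most $1$, yielding $\dim Q(T^{\diamond}) \leq 1$ and, with the lower bound, equality.

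The step I expect to matter most is the choice of extension. Passing through the full kernel $B_{\infty}$ is what makes the right-hand term tractable, because Kotschick's stabilization result pins down $Q(B_{\infty})$ completely; using instead the kernel $[B_{\infty}, B_{\infty}]$ would leave one to control $Q([B_{\infty}, B_{\infty}])^{T^{\diamond}}$, which is not obviously small. The subsidiary claim $Q(T) = 0$ is routine but deserves to be stated explicitly, since it is exactly what removes the left-hand term and allows the bound on the kernel to pass to $T^{\diamond}$.
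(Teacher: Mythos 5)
Your proposal is correct and follows essentially the same route as the paper: the paper likewise applies Proposition \ref{prop:quasimorp_exact} to the extension $1 \to B_{\infty} \to T^{\diamond} \to T \to 1$, kills $Q(T)$ by uniform perfectness of $T$, and bounds the right-hand term by Kotschick's computation of $Q(B_{\infty})$, with the non-trivial element $\psi_{\diamond}$ supplying the lower bound. The only cosmetic difference is that the paper notes $Q(B_{\infty})^{T^{\diamond}} = Q(B_{\infty})$ via invariance of the abelianization, whereas you only use $\dim Q(B_{\infty})^{T^{\diamond}} \leq \dim Q(B_{\infty}) = 1$, which suffices.
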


\begin{proof}
  Since there is a non-trivial element
  $\psi \in Q(T^{\diamond})$, it is enough to show that
  the inequality $\dim Q(T^{\diamond}) \leq 1$ holds.
  For an exact sequence
  $1 \to B_{\infty} \to T^{\diamond} \to T \to 1$,
  we have an exact sequence
  \[
    0 \to Q(T) \to Q(T^{\diamond}) \to
    Q(B_{\infty})^{T^{\diamond}}
  \]
  by Proposition \ref{prop:quasimorp_exact}.
  Since the Thompson group $T$ is uniformly perfect,
  we have $Q(T) = 0$.
  Thus the map
  $Q(T^{\diamond}) \to Q(B_{\infty})^{T^{\diamond}}$
  is injective.
  The $\mathbb{R}$-vector space
  $Q(B_{\infty})$ is spanned by the
  abelianization homomorphism $B_{\infty} \to \mathbb{Z}$
  (see Kotschick\cite{kotschick08}).
  Moreover, since the abelianization homomorphism is
  $T^{\diamond}$-conjugation invariant,
  we have $Q(B_{\infty}) = Q(B_{\infty})^{T^{\diamond}}$.
  Thus we have
  $\dim Q(T^{\diamond}) \leq \dim Q(B_{\infty})^{T^{\diamond}}
  = \dim Q(B_{\infty}) = 1$.
\end{proof}

\begin{proof}[Proof of Theorem \ref{theorem:rational}]
  Note that, by Lemma \ref{lemma:rotation_defect} and
  the surjectivity of $\rho: T^{\diamond} \to T_{\ab}^{\diamond}$,
  the defect $D(\psi_*) = D(\rho^* \phi_*)$
  is equal to $D(\phi_*) = 12$
  and $D(\psi_{\sharp}) = D(\rho^* \phi_{\sharp})$
  is equal to $D(\phi_{\sharp}) = 21$.
  Since $H_1(T^*) = \mathbb{Z}/12\mathbb{Z}$,
  the power $g^{12}$ is in $[T^*, T^*]$ for any $g \in T^*$.
  Thus, by Lemma \ref{lemma:dim_bPT=1} and
  the Bavard's duality theorem, we have
  \[
    \scl(g) = \frac{\scl(g^{12})}{12}
    = \frac{|\psi_*(g^{12})|}{288} =
    \frac{|\phi_*(\rho(g))|}{24}
  \]
  for any $g \in [T^*, T^*]$.
  Thus, by Lemma \ref{lemma:rotation_rational},
  we have $\scl(T^*) = \mathbb{Q}_{\geq 0}$.
  We can prove $\scl(T^{\sharp}) = \mathbb{Q}_{\geq 0}$
  in the same way.
\end{proof}

\end{document}